\newtheorem{theorem}{Theorem}[section]
\newtheorem{proposition}[theorem]{Proposition}
\newtheorem{lemma}[theorem]{Lemma}
\begin{document}
{\center{{\bf{Recursive constructions of $k$-normal polynomials over
finite fields}}}\vspace{.5cm}
{\center{Mahmood Alizadeh}\\
Department of Mathematics,
Ahvaz Branch, Islamic Azad University, Ahvaz, Iran\\
 alizadeh@iauahvaz.ac.ir\\
\vspace{.5cm}
Saeid Mehrabi\\
Department of Mathematics, Farhangian University, Tehran, Iran\\
{\hspace{5.5cm}{saeid\_mehrabi@yahoo.com}}}}\\

\begin{abstract}
The paper is devoted to produce infinite sequences of $k$-normal
polynomials $F_{u}(x)\in \mathbb{F}_{q}[x]$ of degrees $np^{u} ~
(u\geq 0)$, for a suitably chosen initial $k$-normal polynomial
$F_{0}(x)\in \mathbb{F}_{q}[x]$ of degree $n$ over $\mathbb{F}_{q}$
 by iteratively applying the
transformation $x\rightarrow \frac{x^p-x}{x^p-x+\delta}$, where
$\delta\in \mathbb{F}_{q}$ and $0\leq k\leq n-1$.
\end{abstract}

Keywords:Finite fields; normal basis; $k$-normal element; $k$-normal
polynomial.

 Mathematics Subject Classification:12A20

\section{Introduction}\label{sec11}

Let $\mathbb{F}_{q}$ be the Galois field of order $q=p^{m}$, where
$p$ is a prime and $m$ is a natural number, and
$\mathbb{F}^{\ast}_{q}$ be its multiplicative group. A $normal~
basis$ of $\mathbb{F}_{q^n}$ over $\mathbb{F}_{q}$ is a basis of the
form $ N={\{\alpha, \alpha^{q}, ... , \alpha^{q^{n-1}}\}}$,  i.e. a
basis that consists of the algebraic conjugates
 of a
fixed element $\alpha\in{\mathbb{F}^{\ast}_{q^n}}$. Such an element
$\alpha\in\mathbb{F}_{q^n}$ is said to generate a normal
 basis for $\mathbb{F}_{q^n}$ over $\mathbb{F}_{q}$, and for convenience, is called a $normal~element$.

A monic irreducible polynomial $F(x)\in\mathbb{F}_{q}[x]$ is called
{\it{normal polynomial}} or {\it{$N$-polynomial}} if its roots are
linearly independent over $\mathbb{F}_{q}$. Since the elements in a
normal basis are exactly the roots of some N-polynomials, there is a
canonical one-to-one correspondence between N-polynomials and normal
elements. Normal bases have many applications, including  coding
theory, cryptography and computer algebra systems. For further
details, see \cite{8}.

 Recently, the $k$-normal elements over finite fields are defined and characterized by
Huczynska et al \cite{7}. For each $0\leq k\leq n-1$, the element
$\alpha \in \mathbb{F}_{q^n}$ is called a $k$-normal element if
$deg(gcd(x^n-1, \sum_{i=0}^{n-1}{\alpha^{q^i}}x^{n-1-i}))=k$.

 By analogy with the case of
normal polynomials, a monic irreducible polynomial $P(x)\in
\mathbb{F}_{q}[x]$ of degree $n$ is called a $k$-normal polynomial
(or $N_k$-polynomial) over $\mathbb{F}_{q}$ if its roots are
$k$-normal elements of $\mathbb{F}_{q^n}$ over $\mathbb{F}_{q}$.
%\end{definition}
Here, $P(x)$ has $n$ distinct conjugate roots, of which $(n-k)$ are
linearly independent. Recall that an element $\alpha \in
\mathbb{F}_{q^n}$ is called a proper element of $\mathbb{F}_{q^n}$
over $\mathbb{F}_{q}$ if $\alpha\notin \mathbb{F}_{q^v}$ for any
proper divisor $v$ of $n$. So, the element $\alpha \in
\mathbb{F}_{q^n}$ is a proper $k$-normal element of
$\mathbb{F}_{q^n}$ over $\mathbb{F}_{q}$ if $\alpha$ is a $k$-normal
and proper element of $\mathbb{F}_{q^n}$ over $\mathbb{F}_{q}$.

Using the above mention, a normal polynomial (or element) is a
0-normal polynomial (or element). Since the proper $k$-normal
elements of $\mathbb{F}_{q^n}$ over $\mathbb{F}_{q}$ are the roots
of a $k$-normal polynomial of degree $n$ over $\mathbb{F}_{q}$,
hence the $k$-normal polynomials of degree $n$ over $\mathbb{F}_{q}$
is just another way of describing the proper $k$-normal elements of
$\mathbb{F}_{q^n}$ over $\mathbb{F}_{q}$. Some results regarding
constructions of spacial sequences of $k$-normal polynomials over
$\mathbb{F}_{q}$, in the cases $k=0$ and $1$ can be found in
\cite{1x,3,4,9,10} and \cite{5}, respectively.

 In this paper, a
recursive method for constructing the $k$-normal polynomials of
higher degree from a given $k$-normal polynomial of degree $n$ over
$\mathbb{F}_{q}$ for each $0\leq k\leq n-1$ is given.

\section{Preliminary notes}\label{sec11z}
We use the definitions, notations and results given by Huczynska
\cite{7}, Gao \cite{6} and Kyuregyan \cite{9,10}, where similar
problems are considered.
 We need the following results for our further study.

The trace of $\alpha$ in $\mathbb{F}_{q^n}$ over $\mathbb{F}_{q}$,
is given by
$Tr_{\mathbb{F}_{q^n}|{\mathbb{F}_{q}}}(\alpha)=\sum_{i=0}^{n-1}{\alpha^{q^i}}$.
For convenience, $Tr_{\mathbb{F}_{q^n}|{\mathbb{F}_{q}}}$ is denoted
by $Tr_{q^n|{q}}$.

\begin{proposition}\cite{5}\label{thm321k}
Let $n=n_{1}p^e$, for some $e\geq1$ and $a,b\in
\mathbb{F}^{\ast}_{q}$. Then the element $\alpha$ is a proper
$k$-normal element of $\mathbb{F}_{q^n}$ over $\mathbb{F}_{q}$ if
and only if $a+b\alpha$ is a proper $k$-normal element of
$\mathbb{F}_{q^n}$ over $\mathbb{F}_{q}$.
\end{proposition}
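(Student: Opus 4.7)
My plan is to verify both claims separately, working with the polynomial invariant $g_\gamma(x) := \sum_{i=0}^{n-1} \gamma^{q^i} x^{n-1-i}$ whose gcd with $x^n-1$ controls $k$-normality. Properness is immediate: because $a, b \in \mathbb{F}_q$ with $b \neq 0$, one has $\alpha = (\beta - a)/b$, so $\mathbb{F}_q(\alpha) = \mathbb{F}_q(\beta)$, and therefore $\alpha \in \mathbb{F}_{q^v}$ if and only if $\beta \in \mathbb{F}_{q^v}$ for every divisor $v$ of $n$.

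For the $k$-normality part, the first step is to record the identity
$$g_\beta(x) = a\,S(x) + b\,g_\alpha(x), \qquad S(x) := \frac{x^n - 1}{x - 1} = 1 + x + \cdots + x^{n-1},$$
which follows from $\beta^{q^i} = a + b\alpha^{q^i}$ (using $a, b \in \mathbb{F}_q$) together with $\sum_{i=0}^{n-1} x^{n-1-i} = S(x)$. Multiplying through by $(x-1)$ and using $(x-1)S(x) = x^n - 1$ gives
$$(x-1)\,g_\beta(x) \equiv b(x-1)\,g_\alpha(x) \pmod{x^n - 1},$$
and since $b \in \mathbb{F}_q^*$ this yields the key equality $\gcd\bigl(x^n - 1, (x-1)\,g_\beta(x)\bigr) = \gcd\bigl(x^n - 1, (x-1)\,g_\alpha(x)\bigr)$.

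The next step is to strip the factor of $(x-1)$ off each side. For an irreducible factor $\pi$ of $x^n-1$ with $\pi \neq x-1$, multiplication by $(x-1)$ is invisible to $v_\pi$, so a case analysis on the three subcases $v_\pi(g_\alpha) < v_\pi(x^n-1)$, $=v_\pi(x^n-1)$, or $>v_\pi(x^n-1)$ shows that the $\pi$-adic valuations of $\gcd(x^n-1, g_\beta)$ and $\gcd(x^n-1, g_\alpha)$ already coincide. This reduces everything to the single prime $\pi = x-1$, where, using $x^n - 1 = (x^{n_1}-1)^{p^e}$ in characteristic $p$ (and assuming $\gcd(n_1, p) = 1$), we have $v_{x-1}(x^n-1) = p^e$ and $v_{x-1}(S) = p^e - 1$.

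The crux, and the step I expect to be the main obstacle, is to show that the $(x-1)$-adic valuation of the gcd is the same for $g_\alpha$ and $g_\beta$. Setting $\epsilon_\gamma := 1$ if $(x-1)^{p^e} \nmid g_\gamma$ and $0$ otherwise, the analysis above yields $\deg\gcd(x^n-1, g_\beta) - \deg\gcd(x^n-1, g_\alpha) = \epsilon_\alpha - \epsilon_\beta$, so it remains to prove $\epsilon_\alpha = \epsilon_\beta$. Here I would invoke the properness of $\alpha$ (equivalently, of $\beta$) and translate the condition $(x-1)^{p^e} \mid g_\gamma$ into a statement about $\gamma$ in terms of a suitable trace, so as to force $\epsilon_\alpha$ and $\epsilon_\beta$ to agree and complete the argument.
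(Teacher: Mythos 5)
Your reduction is correct as far as it goes, and there is nothing in the paper to measure it against: Proposition~\ref{thm321k} is quoted from \cite{5} without proof. The identity $g_\beta=aS+bg_\alpha$, the passage to $(x-1)g_\gamma$, the comparison at the irreducible factors $\pi\neq x-1$, and the bookkeeping $\deg\gcd(x^n-1,g_\beta)-\deg\gcd(x^n-1,g_\alpha)=\epsilon_\alpha-\epsilon_\beta$ are all right. The genuine gap is the step you defer, and it is not merely the hardest step: properness does \emph{not} imply $\epsilon_\alpha=\epsilon_\beta$, so no trace computation will close the argument as you envisage. Write $t=p^e$ and let $\mathrm{Ord}_\gamma$ denote the monic polynomial of least degree with $L_{\mathrm{Ord}_\gamma}(\gamma)=0$; by the standard identity $\gcd(x^n-1,g_\gamma)=(x^n-1)/\mathrm{Ord}_\gamma$, one has $\epsilon_\gamma=0$ precisely when $(x-1)\nmid\mathrm{Ord}_\gamma$, i.e.\ when $\gamma$ lies in the $\mathbb{F}_q$-subspace $W=\ker L_{(x^n-1)/(x-1)^{t}}$, the Frobenius-invariant complement of $\mathbb{F}_{q^{t}}$ in $\mathbb{F}_{q^n}$. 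Since $W\cap\mathbb{F}_{q^{t}}=\{0\}$, no element of $\mathbb{F}_q^{\ast}$ lies in $W$, so at most one of $\alpha$ and $a+b\alpha$ can lie in $W$; but $W$ has $q^{n-t}$ elements and in general contains proper elements, and for any such element exactly one of the two $\epsilon$'s vanishes, so the two gcd degrees differ by $1$.

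Concretely, take $q=p=2$ and $n=6$, so $t=2$, $x^6-1=(x-1)^2(x^2+x+1)^2$, and $W=\{\gamma\in\mathbb{F}_{64}:\gamma^{16}+\gamma^{4}+\gamma=0\}$ has $16$ elements, only $4$ of which (all lying in $\mathbb{F}_8$) fail to be proper. Any proper $\gamma\in W$ has $\mathrm{Ord}_\gamma=(x^2+x+1)^2$ and is therefore a proper $2$-normal element, while $\mathrm{Ord}_{1+\gamma}=(x-1)(x^2+x+1)^2$, so $1+\gamma$ is a proper $1$-normal element: the statement as quoted fails in both directions, for $k=1$ and for $k=2$. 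Your strategy does correctly isolate what must be \emph{added} for the claim to hold, namely $\epsilon_\alpha=\epsilon_\beta=1$, i.e.\ that neither $\alpha$ nor $a+b\alpha$ is annihilated by $L_{(x^n-1)/(x-1)^{p^e}}$. That condition is automatic when $k\leq p^e-2$ (then $\epsilon_\alpha=1$, so $k_\beta=k+1-\epsilon_\beta$, and $\epsilon_\beta=0$ would force $k_\beta\geq p^e$ while $k+1<p^e$), but it can fail for $k=p^e-1$ and for $k\geq p^e$, as the example shows. So the deferred step cannot be proved from properness alone; you would need to locate the hypothesis that is evidently missing from the quoted statement, or restrict the range of $k$.
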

%\begin{definition}

 Let $p$ denote the characteristic of $\mathbb{F}_{q}$ and let $n=n_{1}p^{e
}=n_{1}t$,  with $gcd(p,n_{1})=1$ and suppose that
 $x^{n}-1$ has the following factorization in $\mathbb{F}_{q}[x]:$
\begin{equation}
x^{n}-1=(\varphi_{1}(x)\varphi_{2}(x)\cdots\varphi_{r}(x))^{t},\label{eq310}
\end{equation}
where $\varphi_{i}(x)\in \mathbb{F}_{q}[x]$ are the distinct
irreducible factors of $x^{n}-1$. For each $s$, $0\leq s<n$, let
there is a $u_s>0$ such that $R_{s,1}(x)$, $R_{s,2}(x)$, $\cdots$,
$R_{s,{u_s}}(x)$ are all of the $s$ degree polynomials dividing
$x^{n}-1$. So, from (\ref{eq310}) we can write
$R_{s,i}(x)=\prod_{j=1}^{r}{{\varphi_{j}}^{t_{ij}}(x)}$, for each
$1\leq i\leq u_{s}$, $0\leq t_{ij}\leq t$.
  Let
\begin{equation}
\phi_{s,i}(x)=\frac{x^{n}-1}{R_{s,i}(x)}, \label{eq31d1}
\end{equation}
for $1\leq i \leq u_{s} $. Then, there is a useful characterization
 of the $k$-normal polynomials of degree $n$ over $\mathbb{F}_{q}$ as follows.

\begin{proposition} \cite{5}\label{pro3122}
Let $F(x)$ be an irreducible polynomial of degree $n$ over
$\mathbb{F}_{q}$ and $\alpha$ be a root of it. Let $x^{n}-1$ factor
as (\ref{eq310}) and let $\phi_{s,i}(x)$ be as in (\ref{eq31d1}).
Then $F(x)$ is a $N_{k}$-polynomial over $\mathbb{F}_{q}$ if and
only if, there is $j$, $1\leq j\leq u_k$, such that

$$L_{\phi_{k,{j}}}(\alpha)=0, $$
and also
$$L_{\phi_{s,{i}}}(\alpha)\neq 0,$$
for each $s$, $k<s<n$, and $1\leq i \leq u_{s} $, where $u_{s}$ is
the number of all $s$ degree polynomials dividing $x^{n}-1$ and
 $L_{\phi_{s,{i}}}(x)$ is the linearized polynomial defined by
$$L_{\phi_{s,{i}}}(x)=\sum_{v=0}^{n-s}t_{iv}x^{q^{v}} ~ if ~
\phi_{s,{i}}(x)=\sum_{v=0}^{n-s}t_{iv}x^{v}.$$
\end{proposition}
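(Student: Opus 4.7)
The plan is to reduce the statement to a condition on the additive ($\mathbb{F}_q[x]$-module) order of $\alpha$. View $\mathbb{F}_{q^n}$ as an $\mathbb{F}_q[x]$-module in which $x$ acts by the Frobenius $\sigma\colon\beta\mapsto\beta^{q}$. Since $\sigma^{n}=\mathrm{id}$, the annihilator of any $\alpha\in\mathbb{F}_{q^n}$ is a principal ideal generated by a unique monic divisor $m_{\alpha}(x)$ of $x^{n}-1$, which we call the $\mathbb{F}_{q}$-order of $\alpha$. The identity $L_{\phi}(\alpha)=\phi(\sigma)(\alpha)$, visible directly from the definition of $L_{\phi}$, immediately yields the key lemma: for every $\phi(x)\in\mathbb{F}_{q}[x]$ dividing $x^{n}-1$, one has $L_{\phi}(\alpha)=0$ if and only if $m_{\alpha}(x)\mid\phi(x)$.

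The second ingredient translates $k$-normality into a statement about $\deg m_{\alpha}$. Starting from the defining equality $\deg\gcd(x^{n}-1,\,g_{\alpha}(x))=k$ and the reversed-coefficient duality between $g_{\alpha}(x)$ and the linearized annihilator of $\alpha$, one shows $\gcd(x^{n}-1,\,g_{\alpha}(x))=(x^{n}-1)/m_{\alpha}(x)$, so $\alpha$ is $k$-normal if and only if $\deg m_{\alpha}=n-k$. This is the main preparatory work.

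With these two facts in hand, the proposition is essentially a comparison of degrees. For the forward direction, suppose $\deg m_{\alpha}=n-k$. Then $(x^{n}-1)/m_{\alpha}$ is a degree-$k$ divisor of $x^{n}-1$, hence coincides with $R_{k,j}$ for some $j$; for this $j$ we have $\phi_{k,j}=m_{\alpha}$ and $L_{\phi_{k,j}}(\alpha)=0$. For any $s>k$ and any $i$, $\deg\phi_{s,i}=n-s<n-k=\deg m_{\alpha}$, so $m_{\alpha}\nmid\phi_{s,i}$ and $L_{\phi_{s,i}}(\alpha)\ne 0$. For the converse, set $d=\deg m_{\alpha}$. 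The hypothesis $L_{\phi_{k,j}}(\alpha)=0$ forces $m_{\alpha}\mid\phi_{k,j}$, whence $d\le n-k$. If $d<n-k$, then $s:=n-d$ satisfies $k<s<n$ and $(x^{n}-1)/m_{\alpha}$ is a degree-$s$ divisor of $x^{n}-1$, so it equals some $R_{s,i}$; but then $L_{\phi_{s,i}}(\alpha)=L_{m_{\alpha}}(\alpha)=0$, contradicting the hypothesis. Therefore $d=n-k$, i.e. $\alpha$ is $k$-normal and $F(x)$ is an $N_{k}$-polynomial.

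The one nontrivial step is the identity $\gcd(x^{n}-1,\,g_{\alpha}(x))=(x^{n}-1)/m_{\alpha}(x)$: this is where the combinatorial gcd definition of $k$-normality is linked to the module-theoretic notion of order. Once this identity is in place, the rest is bookkeeping with degrees and divisibility among divisors of $x^{n}-1$, together with the simple fact that every divisor of $x^{n}-1$ of a given degree appears in the enumeration $R_{s,1},\dots,R_{s,u_{s}}$.
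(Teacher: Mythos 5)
This proposition is quoted in the paper from \cite{5} without proof, so there is no internal argument to compare yours against; judged on its own terms your proof is correct, and it follows what is almost certainly the route taken in the cited source. The two pillars are the right ones: viewing $\mathbb{F}_{q^n}$ as an $\mathbb{F}_q[x]$-module under $x\cdot\beta=\beta^{q}$, the annihilator of $\alpha$ is generated by a monic divisor $m_{\alpha}$ of $x^{n}-1$ and $L_{\phi}(\alpha)=0$ iff $m_{\alpha}\mid\phi$; and $k$-normality of $\alpha$ is equivalent to $\deg m_{\alpha}=n-k$. The degree bookkeeping in both directions is sound; note only that in the converse you need $\deg m_{\alpha}>0$ to get $s=n-\deg m_{\alpha}<n$, which requires $\alpha\neq 0$ --- automatic for irreducible $F$ of degree $n\geq 2$, while for $n=1$ the range $k<s<n$ is empty, so this is harmless but worth a sentence. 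The one substantive issue is that the identity $\gcd\left(x^{n}-1,\,g_{\alpha}(x)\right)=(x^{n}-1)/m_{\alpha}(x)$, which you yourself single out as the only nontrivial step, is asserted rather than proved: all of the real content of the proposition sits in that equality. It is exactly the order characterization of $k$-normal elements in Huczynska, Mullen, Panario and Thomson \cite{7} (their Theorem~3.2), so citing it is entirely legitimate and makes your argument complete; if instead you want a self-contained proof, you must supply the short reversed-coefficient computation showing that for each monic divisor $h$ of $x^{n}-1$ one has $L_{(x^{n}-1)/h}(\alpha)=0$ precisely when $h$ divides $\gcd(x^{n}-1,g_{\alpha}(x))$.
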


 The following propositions
are useful for constructing $N_k$-polynomials over $\mathbb{F}_{q}$.

\begin{proposition}\cite{2}\label{prothe1}
Let $x^{p}-\delta_{2}x+\delta_{0}$ and
$x^{p}-\delta_{2}x+\delta_{1}$ be relatively prime polynomials in
$\mathbb{F}_{q}[x]$ and $P(x)=\sum_{i=0}^n{{c_{i}x^{i}}} $ be an
irreducible polynomial of degree $n\geq2$ over $\mathbb{F}_{q }$,
and let $ \delta_{0}, \delta_{1}\in\mathbb{F}_{q }$, $
\delta_{2}\in{\mathbb{F}^{*}_{q}}$, $(\delta_{0},\delta_{1})\neq
(0,0)$. Then
$$ F(x)={(x^{p}-\delta_{2}x+\delta_{1})^{n}}P\left(\frac{x^{p}-\delta_{2}x+\delta_{0}}{x^{p}-\delta_{2}x+ \delta_{1}}\right)$$
is an irreducible polynomial of degree $np$ over $\mathbb{F}_{q}$ if
and only if ${\delta_{2}}^{\frac{q-1}{p-1}}=1$ and
$$ Tr_{q|{p}}\left(\frac{1}{A^p}\left((\delta_{1}-\delta_{0})\frac{P'(1)}{P(1)}-n\delta_{1}\right)\right)\neq0,$$
where $A^{p-1}=\delta_{2}$, for some $A\in \mathbb{F}^{*}_{q }.$
\end{proposition}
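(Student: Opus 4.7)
The natural approach is to factor $F$ over the splitting field of $P$, reduce the irreducibility of $F$ over $\mathbb{F}_q$ to that of a single Artin--Schreier-type factor $g_0$ over $\mathbb{F}_{q^n}$, apply the classical criterion for the irreducibility of $g_0$, and finally rewrite the resulting trace condition in the form asserted.

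Let $\alpha$ be a root of $P$, so $P(x)=\prod_{i=0}^{n-1}(x-\alpha^{q^i})$ and $\alpha^{q^i}\neq 1$ for every $i$ (otherwise $P(1)=0$, contradicting irreducibility with $n\geq 2$). Substituting the root factorization of $P$ into the defining formula for $F$ and clearing denominators gives
\[F(x)=P(1)\prod_{i=0}^{n-1}\bigl(x^p-\delta_2 x-\gamma_i\bigr),\qquad \gamma_i=\frac{\alpha^{q^i}\delta_1-\delta_0}{1-\alpha^{q^i}}.\]
The coprimality hypothesis forces $\delta_0\neq\delta_1$, from which a short calculation shows the $\gamma_i$ are pairwise distinct, so the factors $g_i(x):=x^p-\delta_2 x-\gamma_i$ are separable polynomials in $\mathbb{F}_{q^n}[x]$ cyclically permuted by the Galois group $\mathrm{Gal}(\mathbb{F}_{q^n}/\mathbb{F}_q)$.

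I would then establish that $F$ is irreducible over $\mathbb{F}_q$ if and only if $g_0$ is irreducible over $\mathbb{F}_{q^n}$. Indeed, any root $\beta$ of $g_0$ satisfies $\alpha=(\gamma_0+\delta_0)/(\gamma_0+\delta_1)\in\mathbb{F}_q(\beta)$ with $\gamma_0=\beta^p-\delta_2\beta$, so $\mathbb{F}_{q^n}=\mathbb{F}_q(\alpha)\subseteq\mathbb{F}_q(\beta)$; combined with $[\mathbb{F}_q(\beta):\mathbb{F}_{q^n}]\leq p$ and $\deg F=np$, this pins down both directions of the equivalence by degree counting. Next, apply the standard criterion to $g_0$: writing $\delta_2=A^{p-1}$ with $A\in\mathbb{F}_q^{*}$ (possible precisely when $\delta_2^{(q-1)/(p-1)}=1$, since $\mathbb{F}_q^{*}$ is cyclic of order $q-1$ and $(p-1)\mid(q-1)$), the substitution $x=Ay$ transforms $g_0$ into the Artin--Schreier polynomial $A^p(y^p-y-\gamma_0/A^p)$, which is irreducible over $\mathbb{F}_{q^n}$ iff $Tr_{q^n|p}(\gamma_0/A^p)\neq 0$.

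It remains to recast this trace into the stated form. Since $A\in\mathbb{F}_q$, the tower rule yields $Tr_{q^n|p}(\gamma_0/A^p)=Tr_{q|p}\bigl(A^{-p}\,Tr_{q^n|q}(\gamma_0)\bigr)$. Rewriting $\gamma_i=-\delta_1+(\delta_1-\delta_0)/(1-\alpha^{q^i})$ and summing over $i$, together with the logarithmic derivative identity $P'(1)/P(1)=\sum_i 1/(1-\alpha^{q^i})$, produces $Tr_{q^n|q}(\gamma_0)=(\delta_1-\delta_0)P'(1)/P(1)-n\delta_1$, which when substituted above gives exactly the trace condition in the statement. The main obstacle is the reduction step: verifying carefully the degree-counting equivalence between irreducibility of $F$ over $\mathbb{F}_q$ and that of $g_0$ over $\mathbb{F}_{q^n}$, and arranging for $A$ to lie in $\mathbb{F}_q$ rather than merely in $\mathbb{F}_{q^n}$.
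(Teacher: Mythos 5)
The paper does not prove this proposition at all: it is quoted as a preliminary from reference \cite{2}, so there is no in-paper argument to compare against. Judged on its own terms, your plan follows the standard route for such composition theorems (factor $F$ over $\mathbb{F}_{q^n}$ as $P(1)\prod_i(x^p-\delta_2x-\gamma_i)$, reduce to the irreducibility of one Artin--Schreier-type factor $g_0$ over $\mathbb{F}_{q^n}$ by degree counting, then translate $Tr_{q^n|p}(\gamma_0/A^p)$ into the stated condition via the transitivity of the trace and the logarithmic-derivative identity). The factorization, the equivalence ``$F$ irreducible over $\mathbb{F}_q$ $\Leftrightarrow$ $g_0$ irreducible over $\mathbb{F}_{q^n}$'', and the computation $Tr_{q^n|q}(\gamma_0)=(\delta_1-\delta_0)P'(1)/P(1)-n\delta_1$ are all correct, and this cleanly establishes the ``if'' direction --- which is the only direction the present paper ever invokes (in Theorem \ref{thm32uu3}, where moreover $\delta_2=1$ and $A=1$).

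The genuine gap is the ``only if'' direction concerning the condition $\delta_2^{(q-1)/(p-1)}=1$, and it is not merely the bookkeeping issue you flag at the end. Your criterion for $g_0$ over $\mathbb{F}_{q^n}$ naturally requires $\delta_2$ to be a $(p-1)$st power in $\mathbb{F}_{q^n}$, not in $\mathbb{F}_q$. Concretely, the Frobenius $x\mapsto x^{q^n}$ acts on the kernel of $x\mapsto x^p-\delta_2x$ as multiplication by $c=\delta_2^{(q^n-1)/(p-1)}\in\mathbb{F}_p^{*}$, and irreducibility of $g_0$ over $\mathbb{F}_{q^n}$ forces only $c=1$; since $c=\bigl(\delta_2^{(q-1)/(p-1)}\bigr)^{n}$, this can hold with $\delta_2^{(q-1)/(p-1)}\neq 1$ whenever $\gcd(n,p-1)>1$. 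Your outline gives no argument that rules this case out, and in fact it cannot be ruled out: for $p=q=3$, $n=2$, $\delta_2=2$, $\delta_0=1$, $\delta_1=0$, $P(x)=x^2+1$, one computes $F(x)=2(x^3+x)^2+2(x^3+x)+1$, which is irreducible of degree $6$ over $\mathbb{F}_3$ (its roots $\beta$ satisfy $\beta^3+\beta=y$ with $y^2+y+2=0$, $y\notin\mathbb{F}_3$, while the image of $x\mapsto x^3+x$ on $\mathbb{F}_9$ is exactly $\mathbb{F}_3$), even though $\delta_2^{(q-1)/(p-1)}=2\neq 1$. So the ``only if'' half cannot be completed along these lines as the statement is quoted here; to finish honestly you would either have to restrict to $\gcd(n,p-1)=1$, or replace the power condition by $\delta_2^{(q^n-1)/(p-1)}=1$ and state the trace condition over $\mathbb{F}_{q^n}$. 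None of this affects the way the proposition is used later in the paper.
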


\begin{proposition}\label{the4b}\cite{1}
Let $x^{p}-x+\delta_{0}$ and $x^{p}-x+\delta_{1}$ be relatively
prime polynomials in $\mathbb{F}_{q}[x]$ and let $ P(x)$  be an
irreducible polynomial of degree $ n\geq2 $ over $ \mathbb{F}_{q} $,
and $0\neq\delta_{1},\delta_{0}\in \mathbb{F}_{p} $, such that $
\delta_{0}\neq \delta_{1} $. Define
$$F_{0}(x)=P(x)\hspace{7.6cm}$$
$$ F_{k}(x)={(x^{p}-x+\delta_{1})^{t_{k-1}}}F_{k-1}\left(\frac{x^{p}-x+\delta_{0}}{x^{p}-x+\delta_{1}}\right), \hspace{1.4cm} k\geq 1$$
where $t_{k}=np^{k}$ denotes the degree of $ F_{k}(x) $. Suppose
that
$$  Tr_{q|{p}}\left(\frac{(\delta_{1}-\delta_{0})F_{0}'(1)-n\delta_{1}F_{0}(1)}{F_{0}(1)}\right)\cdot Tr_{q|{p}}\left(
\frac{(\delta_{1}-\delta_{0})F_{0}'(\frac{\delta_{0}}{\delta_{1}})+n\delta_{1}F_{0}(\frac{\delta_{0}}{\delta_{1}})}{F_{0}(\frac{\delta_{0}}{\delta_{1}})}\right)\neq0.$$
Then ${(F_{k}(x))}_{k\geq0}$ is a sequence of irreducible
polynomials over $\mathbb{F}_{q}$ of degree $t_{k}=np^{k}$, for
every $k\geq0$.
\end{proposition}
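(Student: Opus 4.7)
The plan is induction on $k$, with the base case $k=0$ being the given irreducibility of $P=F_0$. For the inductive step, assuming $F_{k-1}$ is irreducible of degree $np^{k-1}$, I would invoke Proposition \ref{prothe1} with $\delta_2=1$ (hence $A=1$, making $\delta_2^{(q-1)/(p-1)}=1$ and the $1/A^p$ factor trivial) and $P=F_{k-1}$; the coprimality of $x^p-x+\delta_0$ and $x^p-x+\delta_1$ is given. The only substantive thing to verify at each step is the trace condition
$$Tr_{q|p}\!\left((\delta_1-\delta_0)\frac{F_{k-1}'(1)}{F_{k-1}(1)}-np^{k-1}\delta_1\right)\neq 0.$$

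For $k=1$ this is literally the first trace factor in the hypothesis. For $k\geq 2$ the integer $np^{k-1}$ is divisible by $p$, so the term $np^{k-1}\delta_1$ vanishes in $\mathbb{F}_q$ and the requirement reduces to $Tr_{q|p}((\delta_1-\delta_0)F_{k-1}'(1)/F_{k-1}(1))\neq 0$. To handle these uniformly, I would set $c:=\delta_0/\delta_1$ and exploit the key observation that, since $\delta_0,\delta_1\in\mathbb{F}_p$, both $1$ and $c$ are roots of $x^p-x$; consequently the rational map $g(x):=(x^p-x+\delta_0)/(x^p-x+\delta_1)$ satisfies $g(1)=g(c)=c$, and $(x^p-x+\delta_1)'=-1$ in characteristic $p$. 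A direct computation then yields, for all $k\geq 1$,
$$\frac{F_k'(1)}{F_k(1)}=\frac{F_k'(c)}{F_k(c)}=-\frac{np^{k-1}}{\delta_1}+\frac{\delta_0-\delta_1}{\delta_1^2}\cdot\frac{F_{k-1}'(c)}{F_{k-1}(c)}.$$
In particular, for $k\geq 2$ this becomes a clean scalar recursion $\rho_k=\mu\rho_{k-1}$ with $\mu:=(\delta_0-\delta_1)/\delta_1^2\in\mathbb{F}_p^*$.

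Because $Tr_{q|p}$ is $\mathbb{F}_p$-linear and $\mu,\delta_1-\delta_0\in\mathbb{F}_p^*$, the trace condition at step $k+1$ differs from that at step $k$ by a nonzero $\mathbb{F}_p$-scalar for every $k\geq 2$. It therefore suffices to verify it once, at $k=2$; substituting the $k=1$ instance of the recursion and simplifying gives the identity
$$Tr_{q|p}\!\left((\delta_1-\delta_0)\frac{F_1'(1)}{F_1(1)}\right)=-\frac{\delta_1-\delta_0}{\delta_1^2}\,Tr_{q|p}\!\left(\frac{(\delta_1-\delta_0)F_0'(c)+n\delta_1 F_0(c)}{F_0(c)}\right),$$
which is nonzero by the second factor in the hypothesis. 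The main technical obstacle is isolating this exact cancellation --- which is what forces the asymmetric signs $-n\delta_1$ and $+n\delta_1$ in the two factors of the hypothesis --- together with noticing the fixed-point identity $F_k'(1)/F_k(1)=F_k'(c)/F_k(c)$ that collapses an a priori infinite family of trace conditions down to just the two in the hypothesis.
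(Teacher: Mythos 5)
The paper does not actually prove this proposition: it is quoted from reference \cite{1} and used as a black box, so there is no in-paper proof to compare against. Your argument is correct and is essentially the standard one: iterate Proposition~\ref{prothe1} with $\delta_{2}=1$ (so $A=1$ and the root-of-unity condition is automatic), note that the $n\delta_{1}$ term dies for $k\geq 2$ because $p\mid np^{k-1}$, and control the surviving trace conditions via the fixed points $1$ and $c=\delta_{0}/\delta_{1}$ of $x\mapsto(x^{p}-x+\delta_{0})/(x^{p}-x+\delta_{1})$, which yields the logarithmic-derivative recursion $\rho_{k}=-np^{k-1}/\delta_{1}+\mu\rho_{k-1}$ with $\mu=(\delta_{0}-\delta_{1})/\delta_{1}^{2}\in\mathbb{F}_{p}^{*}$ and collapses all conditions for $k\geq 2$ onto the second trace factor (your bookkeeping of the $Tr_{q|p}(1)$ contribution from the constant $-n(\delta_{1}-\delta_{0})/\delta_{1}$ is what makes that identity exact). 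This is the same technique the paper itself uses in the proof of Theorem~\ref{the311ds}, where the special case $\delta_{0}=0$, $\delta_{1}=\delta$ of your recursion appears as the computations of $F_{u}(0)=F_{u}(1)=\delta^{un}P^{*}(0)$ and $F'_{u-1}(1)=-\delta^{n-1}F'_{u-2}(0)$.
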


\begin{lemma}\label{lem21}
Let $\gamma$ be a proper element of $\mathbb{F}_{q^n}$ and
$\theta\in \mathbb{F}^*_{p}$, where $q=p^m$, ($m\in \mathbb{N}$).
Then we have
\begin{equation}
\sum_{j=0}^{p-1}{{\frac{1}{\gamma+j\theta}}}=-\frac{1}{\gamma^{p}-\gamma}.
 \label{eq3350}
\end{equation}

\end{lemma}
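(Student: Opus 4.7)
The plan is to rewrite the left-hand side as a sum indexed over all of $\mathbb{F}_p$, then combine it into a single fraction whose numerator is recognizable as a formal derivative. The key observation is that because $\theta\in\mathbb{F}_p^*$, multiplication by $\theta$ is a bijection of $\mathbb{F}_p$, so as $j$ ranges over $0,1,\ldots,p-1$, the element $j\theta$ ranges over all of $\mathbb{F}_p$. Hence
$$\sum_{j=0}^{p-1}\frac{1}{\gamma+j\theta}=\sum_{a\in\mathbb{F}_p}\frac{1}{\gamma+a}.$$
Before manipulating this, I would observe that since $\gamma$ is a proper element of $\mathbb{F}_{q^n}$ (so in particular $\gamma\notin\mathbb{F}_q\supseteq\mathbb{F}_p$ when $n\geq 2$), every denominator $\gamma+a$ is nonzero, and $\gamma^p-\gamma\neq 0$.

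Next, I would put all terms over a common denominator. Using the well-known identity $\prod_{a\in\mathbb{F}_p}(x-a)=x^p-x$, and the fact that $a\mapsto -a$ is a bijection of $\mathbb{F}_p$,
$$\prod_{a\in\mathbb{F}_p}(\gamma+a)=\prod_{a\in\mathbb{F}_p}(\gamma-a)=\gamma^p-\gamma.$$
So the common denominator is exactly $\gamma^p-\gamma$, matching the right-hand side up to sign. It remains to show that the numerator equals $-1$.

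The numerator is $N(\gamma):=\sum_{a\in\mathbb{F}_p}\prod_{b\in\mathbb{F}_p,\,b\neq a}(\gamma+b)$. Setting $f(x)=\prod_{a\in\mathbb{F}_p}(x+a)$, this is precisely the formal derivative $f'(\gamma)$. Again using $a\mapsto -a$, $f(x)=\prod_{a\in\mathbb{F}_p}(x-a)=x^p-x$, so $f'(x)=px^{p-1}-1=-1$ in characteristic $p$. Therefore $N(\gamma)=-1$, and
$$\sum_{a\in\mathbb{F}_p}\frac{1}{\gamma+a}=\frac{N(\gamma)}{\gamma^p-\gamma}=-\frac{1}{\gamma^p-\gamma},$$
which is the desired identity.

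There is no real obstacle here; the only point requiring a moment of care is justifying that the denominators are nonzero, for which the hypothesis that $\gamma$ is a proper element of $\mathbb{F}_{q^n}$ (ensuring $\gamma\notin\mathbb{F}_p$, hence $\gamma+j\theta\neq 0$ and $\gamma^p\neq\gamma$) is exactly what is needed. The rest is the standard partial-fractions-via-derivative trick applied to the polynomial $x^p-x$.
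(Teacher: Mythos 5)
Your proof is correct, but it takes a genuinely different route from the paper's. You reindex the sum over all of $\mathbb{F}_p$ (using that multiplication by $\theta$ permutes $\mathbb{F}_p$), put everything over the common denominator $\prod_{a\in\mathbb{F}_p}(\gamma+a)=\gamma^p-\gamma$, and identify the numerator as $f'(\gamma)$ for $f(x)=\prod_{a\in\mathbb{F}_p}(x+a)=x^p-x$, whence it equals $-1$; this is the logarithmic-derivative/partial-fractions identity $\sum_a \frac{1}{x+a}=\frac{f'(x)}{f(x)}$. The paper instead factors out $\frac{1}{\gamma^p-\gamma}$ first, rewrites each term as $\frac{\gamma^p-\gamma}{\gamma+j\theta}=(\gamma+j\theta)^{p-1}-1$ (using $(j\theta)^p=j\theta$), and then evaluates $\sum_{j}(\gamma+j\theta)^{p-1}$ by a binomial expansion together with the classical power-sum congruence $\sum_{i=1}^{p-1}i^j\equiv 0$ or $-1 \pmod p$ according as $p-1\nmid j$ or $p-1\mid j$. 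Your argument is arguably cleaner and avoids both the binomial expansion and the power-sum lemma, at the cost of invoking the factorization $x^p-x=\prod_{a\in\mathbb{F}_p}(x-a)$ and the product-rule computation of the numerator; the two are of comparable length and both are fully rigorous. One small point in your favor: you explicitly justify that all denominators are nonzero via the properness hypothesis (noting this needs $n\geq 2$ so that $\gamma\notin\mathbb{F}_q\supseteq\mathbb{F}_p$), a point the paper's proof passes over in silence even though it is exactly what makes both sides of the identity well defined.
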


\begin{proof}
By observing that
\begin{align}
\sum_{j=0}^{p-1}{{\frac{1}{\gamma+j\theta}}}&=\frac{1}{\gamma^{p}-\gamma}
\left(\sum_{j=0}^{p-1}{\frac{\gamma^{p}-\gamma}{\gamma+j\theta}}\right),
 \label{eq3346}
\end{align}
it is enough to show that
\begin{align}
\sum_{j=0}^{p-1}{\frac{\gamma^{p}-\gamma}{\gamma+j\theta}}&=-1.
\end{align}
We note that
\begin{align}
\sum_{j=0}^{p-1}{\frac{\gamma^{p}-\gamma}{\gamma+j\theta}}&=\sum_{j=0}^{p-1}\left({{(\gamma+j\theta)}^{p-1}-1}\right)\nonumber\\
&=\sum_{j=0}^{p-1}{{(\gamma+j\theta)}^{p-1}}\nonumber\\
&=\sum_{j=1}^{p-1}{{\theta^j}}\binom{p-1}{j}\left(\sum_{i=1}^{p-1}{i}^j\right),\label{eq3348dss}
\end{align}
where
$$\binom{p-1}{j}=\frac{(p-1)!}{(p-1-j)!j!},~~~ j\in  \mathbb{F}^{*}_{p}.$$
On the other side, we know that
\begin{equation}
\sum_{i=1}^{p-1}{i^j}=
\begin{cases}
  ~0  ~\pmod{p},~~\mbox{if }~\mbox{p-1}\nmid j\\-1\pmod{p},~\mbox{if }\mbox{
  p-1}\mid j
\end{cases}
\label{eq3348ds}
\end{equation}
and also $\theta^{p-1}=1$. Thus by \eqref{eq3348dss} and
\eqref{eq3348ds}, the proof is completed.

\end{proof}

\section{Recursive construction $N_{k}$-polynomials}\label{sec3}
In this section we establish theorems which will show how
propositions \ref{pro3122}, \ref{prothe1} and \ref{the4b} can be
applied to produce infinite sequences of $N_{k}$-polynomials over
$\mathbb{F}_{q}$. Recall that, the polynomial $P^{*}(x)=x^n
P\left(\frac{1}{x}\right)$ is called the reciprocal polynomial of
$P(x)$, where $n$ is the degree of $P(x)$. In the case $k=0$, some
similar results of the following theorems has been obtained in
\cite{1x}, \cite{3} and (\cite{4}, Theorems 3.3.1 and 3.4.1). We use
of an analogous technique to that used in the above results, where
similar problems are considered.

\begin{theorem}\label{thm32uu3}
Let $P(x)=\sum_{i=0}^{n}c_{i}x^{i}$ be an $N_{k}$-polynomial of
degree $n$ over $\mathbb{F}_{q}$, for each $n=rp^e$, where $e\in
\mathbb{N}$ and $r$ equals 1 or is a prime different from $p$ and
$q$ a primitive element modulo $r$.
 Suppose that $\delta\in\mathbb{F}^{*}_{q}$ and
\begin{equation}
 F(x)={(x^{p}-x+\delta)}^{n}P^{*}\left(\frac{x^{p}-x}{x^{p}-x+ \delta}\right).
 \label{eq330qsas}
\end{equation}
Then $F^{*}(x)$ is an $N_{k}$-polynomial of degree $np$ over
$\mathbb{F}_{q}$ if $k< p^e$ and
$$Tr_{q|{p}}\left(\delta\frac{{P^{*}}^{'}(1)}{P^{*}(1)}\right)\neq0.$$
\end{theorem}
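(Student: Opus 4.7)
The plan is to verify the two clauses of Proposition~\ref{pro3122} for $F^{*}(x)$: first establish its irreducibility, then show that $T:=Tr_{q^{np}|q^{n}}(\gamma)$ is a proper $k$-normal element of $\mathbb{F}_{q^{n}}$ for $\gamma$ a root of $F^{*}$, and finally lift the $k$-normality of $T$ to that of $\gamma$ by an annihilator comparison in $\mathbb{F}_{q}[x]/(x^{np}-1)$ which crucially uses $k<p^{e}$. For the \emph{irreducibility} step I apply Proposition~\ref{prothe1} to $P^{*}$ with $\delta_{0}=0$, $\delta_{1}=\delta$, $\delta_{2}=1$; its trace condition reads $Tr_{q|p}(\delta{P^{*}}'(1)/P^{*}(1)-n\delta)\neq 0$, which coincides with the hypothesis of the theorem since $p\mid n$ (as $e\geq 1$) and hence $n\delta=0$ in $\mathbb{F}_{q}$. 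Thus $F^{*}$ is irreducible of degree $np$ and $\gamma$ is a proper element of $\mathbb{F}_{q^{np}}$.

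For the \emph{trace computation}, setting $\beta=1/\gamma$, the identity $F(\beta)=0$ forces $(\alpha-1)(\beta^{p}-\beta)=\delta$ for some root $\alpha$ of $P$, so $\mu:=\beta^{p}-\beta=\delta/(\alpha-1)\in\mathbb{F}_{q^{n}}$. Since $\beta$ satisfies $x^{p}-x=\mu$ over $\mathbb{F}_{q^{n}}$ and $\mathbb{F}_{q^{n}}(\beta)=\mathbb{F}_{q^{np}}$, one obtains $\beta^{q^{n}}=\beta+c$ for some $c\in\mathbb{F}_{p}^{*}$, whence $\gamma^{q^{jn}}=1/(\beta+jc)$. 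Applying Lemma~\ref{lem21} yields
\[
T \;=\; \sum_{j=0}^{p-1}\frac{1}{\beta+jc} \;=\; -\frac{1}{\mu} \;=\; \frac{1}{\delta}+\Bigl(-\frac{1}{\delta}\Bigr)\alpha,
\]
and since $1/\delta,-1/\delta\in\mathbb{F}_{q}^{*}$, Proposition~\ref{thm321k} shows that $T$ is a proper $k$-normal element of $\mathbb{F}_{q^{n}}$ over $\mathbb{F}_{q}$.

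To \emph{lift} to $\gamma$, let $\phi_{T}(x)=(x^{n}-1)/R_{k,j_{0}}(x)$ be the annihilator of $T$ furnished by Proposition~\ref{pro3122}, where $\deg R_{k,j_{0}}=k$. Using $(x^{n}-1)^{p}=x^{np}-1$ one has $\phi_{T}(x^{n}-1)^{p-1}=(x^{np}-1)/R_{k,j_{0}}$, and $T=L_{(x^{n}-1)^{p-1}}(\gamma)$ gives
\[
L_{(x^{np}-1)/R_{k,j_{0}}}(\gamma) \;=\; L_{\phi_{T}}(T) \;=\; 0,
\]
which is the first clause of Proposition~\ref{pro3122} for $\gamma$. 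For the second clause, let $\psi^{*}$ denote the minimal annihilator of $\gamma$ and set $R^{*}=(x^{np}-1)/\psi^{*}$, so that $R_{k,j_{0}}\mid R^{*}$; the claim is $R^{*}=R_{k,j_{0}}$. Factor $x^{n}-1=\prod\varphi_{i}^{t}$ with $t=p^{e}$, and write $R_{k,j_{0}}=\prod\varphi_{i}^{t_{i}}$ and $R^{*}=\prod\varphi_{i}^{s_{i}^{*}}$. The equivalence $L_{g}(T)=L_{g(x^{n}-1)^{p-1}}(\gamma)$ combined with the minimality of $\phi_{T}$ for $T$ forces $\phi_{T}$ to divide $\psi^{*}/\gcd(\psi^{*},(x^{n}-1)^{p-1})$, whose $\varphi_{i}$-exponent equals $\max(0,t-s_{i}^{*})$; comparing exponents gives either $s_{i}^{*}\leq t_{i}$ (when $s_{i}^{*}<t$) or the impossibility $t_{i}\geq t$ (when $s_{i}^{*}\geq t$). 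Since $k<p^{e}=t$ and $\deg\varphi_{i}\geq 1$ imply $t_{i}<t$, only the first alternative survives, yielding $s_{i}^{*}=t_{i}$ and thus $R^{*}=R_{k,j_{0}}$. I expect the main obstacle to be precisely this bookkeeping: the trace identification only provides one explicit annihilator for $\gamma$, and the hypothesis $k<p^{e}$ is exactly what rules out exotic degree-$s$ divisors of $x^{np}-1$ with $s>k$ that do not descend to $x^{n}-1$.
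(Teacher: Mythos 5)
Your proposal is correct, and its first half is essentially the paper's own argument in different clothing: the irreducibility step via Proposition~\ref{prothe1} with $\delta_0=0$, $\delta_1=\delta$, $\delta_2=1$ (using $p\mid n$ to kill the $n\delta_1$ term) is exactly what the paper does, and your identity $T=Tr_{q^{np}|q^n}(\gamma)=\sum_{j}1/(\beta+jc)=-1/(\beta^p-\beta)=(1-\alpha)/\delta$ via Lemma~\ref{lem21}, followed by Proposition~\ref{thm321k}, is precisely the paper's chain (\ref{eq3340sa})--(\ref{eq3351sa}), merely packaged as a relative trace. Where you genuinely diverge is the final transfer of $k$-normality from $T\in\mathbb{F}_{q^n}$ to $\gamma\in\mathbb{F}_{q^{np}}$. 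The paper verifies the two clauses of Proposition~\ref{pro3122} divisor by divisor: it shows $L_{H_{s,i}}(\beta_1)=L_{\phi_{s,i}}\bigl((1-\alpha)/\delta\bigr)$ for the divisors $R_{s,i}$ of $x^n-1$, and then disposes of the remaining divisors $R'_{s',i'}$ of $x^{np}-1$ by asserting that each $H'_{s',i'}$ divides some $H_{s,i}$ with $k<s<n$ --- a step stated rather tersely and where the role of $k<p^e$ is implicit. You instead compute the exact $q$-order of $\gamma$: from $L_g(T)=L_{g\cdot(x^n-1)^{p-1}}(\gamma)$ you identify $\mathrm{Ord}_T$ with $\psi^*/\gcd(\psi^*,(x^n-1)^{p-1})$ and compare $\varphi_i$-exponents, with $k<p^e$ entering cleanly to exclude $t_i\geq t$. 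This is a more systematic treatment of exactly the point where the paper is thinnest (the ``exotic'' divisors of $x^{np}-1$ not descending to $x^n-1$, including $R'=x^n-1$ itself), at the cost of invoking the annihilator-ideal formalism rather than only Proposition~\ref{pro3122} as a black box. Both arguments are sound; one small presentational note on yours: the relation between $\phi_T$ and $\psi^*/\gcd(\psi^*,(x^n-1)^{p-1})$ is in fact an equality of monic generators, not merely the one divisibility you state, though your conclusion only needs the direction you use together with $R_{k,j_0}\mid R^*$.
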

\begin{proof}
 Since $P^{*}(x)$ is an irreducible polynomial over
$\mathbb{F}_{q}$, so Proposition \ref{prothe1} and theorem's
hypothesis imply that
$F(x)$ is irreducible over $\mathbb{F}_{q}$.\\
Let $\alpha\in\mathbb{F}_{q^n}$ be a root of $P(x)$. Since $P(x)$ is
an $N_{k}$-polynomial of degree $n$ over $\mathbb{F}_{q}$ by
theorem's hypothesis, then $\alpha\in\mathbb{F}_{q^n}$ is a proper
$k$-normal element over $\mathbb{F}_{q}$.

 Since $q$ is a primitive
modulo $r$, so in the case $r>1$ the polynomial $x^{r-1}+\cdots+x+1$
is irreducible over $\mathbb{F}_{q}$. Thus $x^n-1$ has the following
factorization in $\mathbb{F}_{q}[x]$:

\begin{equation}
%x^n-1=(x-1)(x^{r-1}+\cdots+x+1). \label{eq3aww33}
x^n-1=(\varphi_{1}(x)\cdot \varphi_{2}(x))^{t}, \label{eq3aww33}
\end{equation}
 where $\varphi_{1}(x)=x-1$, $\varphi_{2}(x)=x^{r-1}+ \cdots +x+1$
 and
 $t=p^e$.

Letting that for each $0\leq s<n$ and $1\leq i \leq u_{s} $,
$R_{s,i}(x)$ is the $s$ degree polynomial dividing $x^{n}-1$, where
$u_{s}$ is the number of all $s$ degree polynomials dividing
$x^{n}-1$. So, from (\ref{eq3aww33}), we can write
$R_{s,i}(x)={(x-1)}^{s_{1,i}}\cdot
{(x^{r-1}+\cdots+x+1)}^{s_{2,i}}$, where
$s=s_{1,i}+s_{2,i}\cdot{(r-1)}$ for each $0\leq s_{1,i}, s_{2,i}\leq
t$, except when $s_{1,i}= s_{2,i}= t$. So, we have

\begin{equation}\label{eq31w1}
\phi_{s,i}(x)=\frac{x^{n}-1}{R_{s,i}(x)}=\frac{x^{n}-1}{{(x-1)}^{s_{1,i}}\cdot
{(x^{r-1}+\cdots+x+1)}^{s_{2,i}}}=\sum_{v=0}^{n-s}{t_{s,i,v}{x^{v}}}.
\end{equation}

Since  $P(x)$ is an $N_{k}$-polynomial of degree $n$ over
$\mathbb{F}_{q}$, so by Proposition \ref{pro3122}, there is a $j$,
$1\leq j\leq u_{k}$, such that

\begin{equation}\label{eq31w1mp}
L_{\phi_{k,{j}}}(\alpha)=0,
\end{equation}
 and also
 \begin{equation}\label{eq31w1np}
L_{\phi_{s,{i}}}(\alpha)\neq 0,
\end{equation}
for each $k<s<n$ and $1\leq i \leq u_{s} $. Further, we proceed by
proving that $F^{*}(x)$ is a $k$-normal polynomial. Let $\alpha_{1}$
be a root of $F(x)$. Then $\beta_{1}=\frac{1}{\alpha_{1}}$ is a root
of its reciprocal polynomial $F^{*}(x)$. Note that by
(\ref{eq3aww33}), the polynomial $x^{np}-1$ has the following
factorization in $\mathbb{F}_{q}[x]$:

 \begin{equation}\label{eq31w1npo}
x^{np}-1={\left(\varphi_{1}(x)\cdot \varphi_{2}(x)\right)}^{pt},
\end{equation}
 where $\varphi_{1}(x)=x-1$, $\varphi_{2}(x)=x^{r-1}+ \cdots +x+1$
 and
 $t=p^e$.

Letting that for each $0\leq s'<np$ and $1\leq i' \leq u'_{s'} $,
$R'_{s',i'}(x)$ is the $s'$ degree polynomial dividing $x^{np}-1$,
where $u'_{s'}$ is the number of all $s'$ degree polynomials
dividing $x^{np}-1$. So, from (\ref{eq31w1npo}) we can write
$R'_{s',i'}(x)={(x-1)}^{s'_{1,i'}}\cdot
{(x^{r-1}+\cdots+x+1)}^{s'_{2,i'}}$, where
$s'=s'_{1,i'}+s'_{2,i'}\cdot{(r-1)}$ for each $0\leq s'_{1,i'},
s'_{2,i'}\leq pt$, except when $s'_{1,i'}= s'_{2,i'}= pt$. Therefore
by considering

\begin{equation}
H'_{s',i'}(x)=\frac{x^{np}-1}{R'_{s',i'}(x)}, \label{eq31w11ds}
\end{equation}
and Proposition \ref{pro3122}, $F^{*}(x)$ is an $N_{k}$-polynomial
of degree $np$ over $\mathbb{F}_{q}$ if and only if there is a $j'$,
$1\leq j'\leq u'_{k}$, such that

$$L_{H'_{k,{j'}}}(\beta_{1})=0, $$
 and also
$$L_{H'_{s',{i'}}}(\beta_{1})\neq 0,$$
for each $k<s'<np$ and $1\leq i' \leq u'_{s'}$ . Consider

 \begin{align}
H_{s,i}(x)&=\frac{x^{np}-1}{R_{s,i}(x)}\nonumber\\
&=\frac{x^{n}-1}{R_{s,i}(x)}\left(\sum_{j=0}^{p-1}x^{jn}\right),
\label{555ss}
\end{align}
for each $0\leq s<n$ and $1\leq i \leq u_{s} $. By (\ref{eq31w1}) we
obtain

$$H_{s,i}(x)=\phi_{s,i}(x)\left(\sum_{j=0}^{p-1}x^{jn}\right)=\sum_{v=0}^{n-s}{t_{s,i,v}}\left(\sum_{j=0}^{p-1}x^{jn+v}\right).$$
It follows that
$$L_{H_{s,i}}(\beta_{1})=\sum_{v=0}^{n-s}t_{s,i,v}\left(\sum_{j=0}^{p-1}\left(\beta_{1}\right)^{p^{jmn}}\right)^{p^{mv}},$$
or
\begin{equation}
L_{H_{s,i}}(\beta_{1})=L_{H_{s,i}}\left(\frac{1}{\alpha_{1}}\right)=
\sum_{v=0}^{n-s}t_{s,i,v}\left(\sum_{j=0}^{p-1}\left(\frac{1}{\alpha_{1}}\right)^{p^{jmn}}\right)^{p^{mv}}.
\label{eq339sa}
\end{equation}
From (\ref{eq330qsas}), if $\alpha_{1}$ is a zero of $F(x)$, then
$\frac{{\alpha_{1}}^{p}-{\alpha_{1}}+\delta}{{\alpha_{1}}^{p}-{\alpha_{1}}}$
is a zero of $P(x)$, and therefore it may assume that
$$\alpha=\frac{{\alpha_{1}}^{p}-{\alpha_{1}}+\delta}{{\alpha_{1}}^{p}-{\alpha_{1}}},$$
or
\begin{equation}
\frac{\alpha-1}{\delta}={({\alpha_{1}}^{p}-{\alpha_{1}})}^{-1}.
 \label{eq3340sa}
\end{equation}
Now, by (\ref{eq3340sa}) and observing that $P(x)$ is an irreducible
polynomial of degree $n$ over $\mathbb{F}_{q}$, we obtain
\begin{equation}
\frac{\alpha-1}{\delta}={(\frac{\alpha-1}{\delta})}^{p^{mn}}={({\alpha_{1}}^{p^{mn+1}}-{\alpha_{1}}^{p^{mn}})}^{-1}.
 \label{eq3341sa}
\end{equation}
It follows from (\ref{eq3340sa}) and (\ref{eq3341sa}) that
\begin{equation}
{({\alpha_{1}}^{p^{mn+1}}-{\alpha_{1}}^{p^{mn}})}^{-1}={({\alpha_{1}}^{p}-{\alpha_{1}})}^{-1}.
\label{eq3342}
\end{equation}
Also observing that $F(x)$ is an irreducible polynomial of degree
$np$ over $\mathbb{F}_{q}$, we have
 ${({\alpha_{1}}^{p}-{\alpha_{1}})}\neq0$ and
 ${({\alpha_{1}}^{p^{mn+1}}-{\alpha_{1}}^{p^{mn}})}\neq0$. Hence by
 (\ref{eq3342})
\begin{equation}
({{\alpha_{1}}^{p^{mn}}-{\alpha_{1}})}^{p}=({{\alpha_{1}}^{p^{mn}}-{\alpha_{1}})}.
\label{eq3343sa}
\end{equation}
It follows from (\ref{eq3343sa}) that
${\alpha_{1}}^{p^{mn}}-{\alpha_{1}}=\theta \in \mathbb{F}^*_{p}$.
Hence ${\alpha_{1}}^{p^{mn}}={\alpha_{1}}+\theta$ and

$${\alpha_{1}}^{p^{2mn}}={{(\alpha_{1}}+\theta)}^{p^{mn}}={\alpha_{1}}^{p^{mn}}+\theta^{p^{mn}}=
\alpha_{1}+\theta+\theta=\alpha_{1}+2\theta.$$
 It is easy to show that
$${\alpha_{1}}^{p^{jmn}}={\alpha_{1}}+j\theta, ~~ for ~~ 1\leq j\leq p-1,$$
or
\begin{equation}
{\left(\frac{1}{{\alpha_{1}}}\right)}^{p^{jmn}}=\frac{1}{{\alpha_{1}}+j\theta},
~~ for ~~ 1\leq j\leq p-1.
 \label{eq3344sa}
\end{equation}
From (\ref{eq339sa}) and (\ref{eq3344sa}), we immediately obtain
\begin{equation}
L_{H_{s,i}}(\beta_{1})=\sum_{v=0}^{n-s}{t_{s,i,v}\left({\sum_{j=0}^{p-1}{{\frac{1}{\alpha_{1}+j\theta}}}}\right)^{p^{mv}}}.
\label{eq3345sa}
\end{equation}

Thus, by (\ref{eq3340sa}), (\ref{eq3345sa}) and Lemma \ref{lem21} we
have
\begin{align}
L_{H_{s,i}}(\beta_{1})&=\sum_{v=0}^{n-s}{t_{s,i,v}\left(-\frac{1}{\alpha_{1}^{p}-\alpha_{1}}\right)^{p^{mv}}}\nonumber\\
&=\frac{1}{\delta}\sum_{v=0}^{n-s}{t_{s,i,v}{({1-\alpha})}^{p^{mv}}}\nonumber\\
&=L_{\phi_{s,i}}\left(\frac{1-\alpha}{\delta}\right).
 \label{eq3351sa}
\end{align}
Since $\alpha$ is a zero of $P(x)$, then $\alpha$ will be a
$k$-normal element in $\mathbb{F}_{q^n}$ over $\mathbb{F}_{q}$. Thus
according to Proposition \ref{thm321k}, the element
$\frac{1-\alpha}{\delta}$ will also be a $k$-normal element.

  since
$\frac{1-\alpha}{\delta}$ is a root of  $P(-\delta x+1)$, so by
(\ref{eq3351sa}) and Proposition \ref{pro3122}, there is a $j$,
$1\leq j\leq u_{k}$, such that

$$L_{H_{k,{j}}}(\beta_{1})=0, $$
and also
$$L_{H_{s,{i}}}(\beta_{1})\neq 0,$$
for each $s$, $k<s<n$ and $1\leq i \leq u_{s} $. So, there is a
$j'$, $1\leq j'\leq u'_{k}$, such that,
$L_{H'_{k,{j'}}}(\beta_{1})=L_{H_{k,{j}}}(\beta_{1})=0$. On the
other side, by (\ref{eq31w11ds}) and (\ref{555ss}), for each $s'$,
$k<s'<np$ and $1\leq i' \leq u'_{s'}$, there is $s$, $k<s<n$ and
$1\leq i \leq u_{s}$ such that $H'_{s',{i'}}(x)$ divide
$H_{s,{i}}(x)$. It follows that
$$L_{H'_{s',{i'}}}(\beta_{1})\neq 0,$$
for each $s'$, $k<s'<np$ and $1\leq i' \leq u'_{s'}$. The proof is
completed.
\end{proof}
 In the following theorem, a
computationally simple and explicit recurrent method for
constructing higher degree $N_{k}$-polynomials over $\mathbb{F}_{q}$
starting from an $N_{k}$-polynomial is described.

\begin{theorem}\label{the311ds}
Let $P(x)$ be an $N_k$-polynomial of degree $n$ over
$\mathbb{F}_{q}$, for each $n=rp^e$, where $e\in \mathbb{N}$ and $r$
equals 1 or is a prime different from $p$ and $q$ a primitive
element modulo $r$. Define
$$F_{0}(x)=P^{*}(x)\hspace{4.9cm}$$
\begin{equation}
F_{u}(x)={(x^p-x+\delta)}^{np^{u-1}}F_{u-1}\left(\frac{x^p-x}{x^p-x+\delta}\right),
\label{eq3353}
\end{equation}
where $\delta\in\mathbb{F}^*_{p}$. Then ${(F^{*}_{u}(x))}_{u\geq0}$
is a sequence of $N_k$-polynomials of degree $np^{u}$ over
$\mathbb{F}_{q}$ if $k< p^e$ and
$$ Tr_{q|{p}}\left(\frac{{P^{*}}^{'}(0)}{{P^{*}}(0)}\right)\cdot Tr_{q|{p}}\left(\frac{{P^{*}}^{'}(1)}{P^{*}(1)}\right)\neq0,$$
where ${P^{*}}^{'}(0)$ and ${P^{*}}^{'}(1)$ are the formal
derivative of $P^{*}(x)$ at the points $x=0$ and $x=1$,
respectively.
\end{theorem}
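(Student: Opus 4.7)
My plan is to prove the theorem by induction on $u$, iterating the single-step construction of Theorem \ref{thm32uu3}. The base case $u=0$ is immediate: $F_0^{*}(x) = P^{**}(x) = P(x)$ is an $N_k$-polynomial by hypothesis (here $P^{**}=P$ since $P$ is monic, giving $P^{*}(0) = 1 \neq 0$). For the inductive step, suppose $F_{u-1}^{*}$ is an $N_k$-polynomial of degree $np^{u-1} = rp^{\,e+u-1}$. Apply Theorem \ref{thm32uu3} with the $N_k$-polynomial there taken to be $F_{u-1}^{*}$. Since $F_{u-1}(0) \neq 0$ (checked inductively from $F_u(0) = \delta^{np^{u-1}} F_{u-1}(0)$ and $F_0(0) = 1$), we have $(F_{u-1}^{*})^{*} = F_{u-1}$, so the polynomial produced by that theorem equals
\[
(x^p-x+\delta)^{np^{u-1}} F_{u-1}\!\left(\tfrac{x^p-x}{x^p-x+\delta}\right) = F_u(x).
\]
Theorem \ref{thm32uu3} therefore yields $F_u^{*}$ is $N_k$, provided (i) $k < p^{e+u-1}$, which is automatic from $k < p^e$, and (ii) the trace condition
\[
T_u := \operatorname{Tr}_{q\mid p}\!\left(\delta\,\tfrac{F_{u-1}'(1)}{F_{u-1}(1)}\right) \neq 0.
\]

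The substantive part is verifying $T_u\neq0$ for every $u\geq1$. Set $h(x) = x^p - x + \delta$ and $g(x) = (x^p - x)/h(x)$. A direct check in characteristic $p$ gives $h(0) = h(1) = \delta$, $h'(0) = h'(1) = -1$, $g(0) = g(1) = 0$, and $g'(0) = g'(1) = -1/\delta$; in particular $x=0$ and $x=1$ behave identically under $g$. Because $n = rp^e$ with $e \geq 1$, the exponent $np^{u-1}$ is divisible by $p$ for all $u \geq 1$, so the term $np^{u-1}h(x)^{np^{u-1}-1}h'(x)F_{u-1}(g(x))$ arising from differentiating $h(x)^{np^{u-1}}F_{u-1}(g(x))$ vanishes as a scalar. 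A short calculation from \eqref{eq3353} then yields, for $u\ge1$,
\[
\frac{F_u'(0)}{F_u(0)} \;=\; \frac{F_u'(1)}{F_u(1)} \;=\; -\frac{1}{\delta}\cdot\frac{F_{u-1}'(0)}{F_{u-1}(0)},
\]
which iterates to $F_u'(1)/F_u(1) = (-1/\delta)^{u}\,(P^{*})'(0)/P^{*}(0)$ for $u\ge1$.

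Consequently $T_1 = \delta\cdot\operatorname{Tr}_{q\mid p}\!\big((P^{*})'(1)/P^{*}(1)\big)$ and, for $u\ge2$,
\[
T_u \;=\; (-1)^{u-1}\delta^{2-u}\,\operatorname{Tr}_{q\mid p}\!\left(\frac{(P^{*})'(0)}{P^{*}(0)}\right).
\]
Since $\delta\in\mathbb{F}_p^{*}$, every prefactor here lies in $\mathbb{F}_p^{*}$ and can be pulled out of the $\mathbb{F}_p$-linear map $\operatorname{Tr}_{q\mid p}$; the two hypotheses of the theorem therefore force $T_u\neq0$ for all $u\ge1$, completing the induction.

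The main obstacle is assembling the derivative recurrence cleanly. One must recognise the coincidence $g(0)=g(1)=0$ with equal derivatives at those points (which explains why only the two points $0$ and $1$ appear in the hypothesis), and exploit $p\mid np^{u-1}$ for $u\ge1$ to discard the "bad" derivative term. After that, the proof is an exercise in $\mathbb{F}_p$-linearity of the trace, which absorbs the $\mathbb{F}_p^{*}$ scalar factors accumulated during iteration.
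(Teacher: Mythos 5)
Your proposal is correct and follows essentially the same route as the paper: induction on $u$, applying Theorem \ref{thm32uu3} at each step, with the key computation being the recurrence $F_u'(1)/F_u(1)=-\delta^{-1}F_{u-1}'(0)/F_{u-1}(0)$ (valid because $p\mid np^{u-1}$ kills the other derivative term) and the observation that the accumulated $\mathbb{F}_p^{*}$ scalars pass through $Tr_{q|p}$. The only cosmetic difference is that the paper separately cites Proposition \ref{the4b} for irreducibility and splits the cases $u=2$ and $u\geq 3$, whereas you fold everything into one clean recurrence; the substance is identical.
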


\begin{proof} By Proposition \ref{the4b} and hypotheses of theorem for each
$u\geq1$, $F_{u}(x)$ is an irreducible polynomial over
$\mathbb{F}_{q}$. Consequently, ${(F^{*}_{u}(x))}_{u\geq0}$ is a
sequence of irreducible polynomials over $\mathbb{F}_{q}$. The proof
of $k$-normality of the irreducible polynomials $F^{*}_{u}(x)$, for
each $u\geq1$ is implemented by mathematical induction on $u$. In
the case $u=1$,  by Theorem \ref{thm32uu3} ${F_{1}}^{*}(x)$ is a
$k$-normal polynomial.

For $u=2$ we show that ${F_{2}}^{*}(x)$ is also a $k$-normal
polynomial. To this end we need to show that the hypothesis of
Theorem \ref{thm32uu3} are satisfied. By Theorem \ref{thm32uu3},
${F_{2}}^{*}(x)$ is a $k$-normal polynomial if
$$ Tr_{q|{p}}\left(\frac{F'_{1}(1)}{F_{1}(1)}\right)\neq0,$$

since $\delta\in\mathbb{F}^*_{p}$. We apply (\ref{eq3353}) to
compute
\begin{equation}
F_{u}(0)=F_{u}(1)=\delta^{un}P^{*}(0),~~ u=1,2,\ldots.
 \label{eq3354}
\end{equation}
We calculate the formal derivative of $F'_{1}(x)$ at the points
$x=0$ and $x=1$. According to (\ref{eq3353}) the first derivative of
$F_{1}(x)$ is
\begin{align}
{F_{1}}^{'}(x)&=-n{(x^p-x+\delta)}^{n-1}{F_{0}}^{'}\left(\frac{x^p-x}{x^p-x+\delta}\right)\nonumber\\
\qquad&+{(x^p-x+\delta)}^{n}\cdot\left(\frac{(px^{p-1}-1)(x^p-x+\delta)-(px^{p-1}-1)(x^p-x)}{{(x^p-x+\delta)}^{2}}\right)\nonumber\\
&\cdot{F_{0}}^{'}\left(\frac{x^p-x}{x^p-x+\delta}\right)\nonumber\\
&=-\delta{(x^p-x+\delta)}^{n-2}\cdot{P^{*}}^{'}\left(\frac{x^p-x}{x^p-x+\delta}\right),\nonumber
\label{eq3355}
\end{align}
and at the points $x=0$ and $x=1$
\begin{equation}
{F_{1}}^{'}(0)=-{F_{1}}^{'}(1)=-{\delta}^{n-1}{P^{*}}^{'}(0)
\label{eq3356}
\end{equation}
which is not equal to zero by the condition
$Tr_{q|{p}}\left(\frac{{P^{*}}^{'}(0)}{{P^{*}}(0)}\right)\neq0$ in
the hypothesis of theorem, since $\delta\in\mathbb{F}^*_{p}$. From
(\ref{eq3356}) and (\ref{eq3354})
\begin{align}
Tr_{q|{p}}\left(\frac{F'_{1}(1)}{F_{1}(1)}\right)&=
Tr_{q|{p}}\left(\frac{-{\delta}^{n-1}{P^{*}}^{'}(0)}{\delta^{n}P^{*}(0)}\right)\nonumber\\
&=-\frac{1}{\delta}Tr_{q|{p}}\left(\frac{{P^{*}}^{'}(0)}{P^{*}(0)}\right),
\label{eq3357}
\end{align}
which is not equal to zero by hypothesis of theorem. Hence the
polynomial ${F^{*}_{2}}(x)$ is a $k$-normal polynomial.
 If induction
holds for $u-1$, then it must hold also for $u$, that is by assuming
that $F^{*}_{u-1}(x)$ is a $k$-normal polynomial, we
show that $F^{*}_{u}(x)$ is also a $k$-normal polynomial.\\
Let $u\geq3$. By Theorem \ref{thm32uu3}, $F^{*}_{u}(x)$ is a
$k$-normal polynomial if
$$ Tr_{q|{p}}\left(\frac{F'_{u-1}(1)}{F_{u-1}(1)}\right)\neq0,$$

 since
$\delta\in\mathbb{F}^*_{p}$. We calculate the formal derivative of
$F'_{u-1}(x)$ at the points $1$ and $0$. By (\ref{eq3353}) the first
derivative of $F_{u-1}(x)$ is
\begin{align}
F'_{u-1}(x)&={(x^p-x+\delta)}^{np^{u-2}}\left(\frac{(px^{p-1}-1)(x^p-x+\delta)-(px^{p-1}-1)(x^p-x)}{{(x^p-x+\delta)}^{2}}\right)\nonumber\\
&\cdot F'_{u-2}\left(\frac{x^p-x}{x^p-x+\delta}\right)\nonumber\\
&=-\delta{(x^p-x+\delta)}^{np^{u-2}-2}F'_{u-2}\left(\frac{x^p-x}{x^p-x+\delta}\right),\nonumber
\label{eq3358b}
\end{align}

and at the point $x=0$ and $x=1$
\begin{equation}
F'_{u-1}(0)=F'_{u-1}(1)=-{\delta}^{np^{u-2}-1}F'_{u-2}(0)=-{\delta}^{n-1}F'_{u-2}(0).
\label{eq3358}
\end{equation}
So we have
\begin{equation}
F'_{u-1}(0)=F'_{u-1}(1)={(-1)}^{u-2}{\delta}^{(n-1)(u-2)}F'_{1}(0),
\label{eq3359}
\end{equation}
which is not equal to zero by (\ref{eq3356}) and the condition
$Tr_{q|{p}}\left(\frac{{P^{*}}^{'}(0)}{{P^{*}}(0)}\right)\neq0$ in
the hypothesis of theorem, since $\delta\in\mathbb{F}^*_{p}$. Also
\begin{equation}
Tr_{q|{p}}\left(\frac{F'_{u-1}(1)}{F_{u-1}(1)}\right)={(-1)}^{u-2}\frac{1}{{\delta}^{(u-2)}}Tr_{q|{p}}\left(\frac{F'_{1}(1)}{F_{1}(1)}\right),\nonumber
\end{equation}
which is not equal to zero by (\ref{eq3357}) and hypothesis of
theorem. The theorem is proved.
\end{proof}\vspace{.70cm}


\begin{thebibliography}{0}


\bibitem{1} S. Abrahamyan, M. K. Kyureghyan, A recurrent method for constructing irreducible
polynomials over finite fields,  proceeding of the 13th
international conference on computer algebra in scientific
computing, 2011, 1-9.
\bibitem{1x} S. Abrahamyan, M. K. Kyureghyan, New recursive construction of normal polynomials over finite fields,
 proceeding of the 11th international conference on finite fields and their applications, 2013, 1-10.

 \bibitem{2} S. Abrahamyan, M. Alizadeh, M. K. Kyureghyan, Recursive constructions of irreducible polynomials over
finite fields, Finite Fields Appl., 18 (2012), 738-745.
\bibitem{3} M. Alizadeh, S. Abrahamyan, S. Mehrabi, M. K. Kyuregyan, Constructing of $N$-polynomials over finite fields,
International Journal of Algebra, Vol. 5, No. 29, 2011, 1437-1442.
\bibitem{4} M. Alizadeh, Construction of Irreducible and Normal Polynomials over Finite Fields, Ph.D. Thesis,
National Academy of Sciences of Armenia, Yerevan, 2013.
\bibitem{5} M. Alizadeh, Some notes on the $k$-normal elements and $k$-normal
polynomials over finite fields, J. Algebra Appl.,
DOI:10.1142/S0219498817500062, 2016.

\bibitem{6} S. Gao, Normal bases over finite fields, Ph.D. Thesis,
Waterloo, 1993.
\bibitem{7} S. Huczynska, G. L. Mullen, D. Panario and D. Thomson,
Existence and properties of k-normal elements over finite fields,
Finite Fields Appl., 24 (2013), 170-183.
\bibitem{8} D. Jungnickel, Finite Fields: Structure and Arithmetics, Wissenschaftsverlag, (Mannheim) 1993.

\bibitem{9} M. K. Kyuregyan, Iterated construction of irreducible
polynomials over finite fields with linearly independent roots,
Finite Fields Appl., 10 (2004) 323-341.

\bibitem{10} M. K. Kyuregyan, Recursive construction of N-polynomials over
$GF(2^s)$, Discrete Applied Mathematics, No. 156, 2008, 1554-1559.
\bibitem{11} H. W. Lenstra and R. Schoof, Primitive normal bases for
finite fields, Mathematics of Computation, 48 (1987), 217-231.


\bibitem{12} S. Schwartz, Irreducible polynomials over finite fields with
linearly independent roots, Math. Slovaca, Vol. 38, 1988, 147-158.
 \end{thebibliography}
\end{document}